\documentclass[a4paper,12pt]{article}
 \usepackage{graphicx}
\usepackage{tabularray}
\UseTblrLibrary{counter,varwidth}
 \usepackage[top=2.5cm,bottom=2.5cm,left=2.5cm,right=2.5cm]{geometry}
 \usepackage{cite, amsmath, amssymb, amsthm, amsfonts, color, verbatim, emptypage}
 \usepackage{enumerate}
 \usepackage[margin=1cm,%
 font=small,%
 format=hang,%
 labelsep=period,%
 labelfont=bf]{caption}
\usepackage[dvipsnames]{xcolor}

\usepackage{pgfplots}
\usepackage{hyperref}
\pgfplotsset{compat=1.15}
\usepackage{mathrsfs}
\usetikzlibrary{arrows}
 \usepackage[para]{footmisc}

 \usepackage{pgf,tikz,pgfplots}
 \usepackage{mathrsfs}
 \usetikzlibrary{arrows}
 \usetikzlibrary{decorations.pathreplacing}
 \tikzstyle{vertex}=[circle,fill=black,inner sep=0pt]
\tikzstyle{edge} = [draw,thick]
\tikzstyle{weight} = [font=\small,fill=white, draw=white,inner sep=0pt, opacity=0.0, text opacity=1]

 \date{ }
  
 \normalsize

 \newtheorem{theorem}{Theorem}
 \newtheorem{corollary}[theorem]{Corollary}

\newtheoremstyle{mytheoremstyle1} % name
        {\topsep}                    % Space above
        {\topsep}                    % Space below
        {}                   % Body font
        {}                           % Indent amount
        {\fontfamily{thm}\selectfont\scshape\color{black}}                   % Theorem head font
        {.}                          % Punctuation after theorem head
        {.5em}                       % Space after theorem head
        {}  % Theorem head spec (can be left empty, meaning ‘normal’)
\theoremstyle{mytheoremstyle1}
 
 \newtheorem{case}{Case}

 \tikzstyle{every node}=[circle, draw, fill=black,inner sep=0pt, minimum width=4pt]
 
 \numberwithin{subcase}{case}
 \theoremstyle{definition}
 \newtheorem{remark}{Remark}
 \newtheorem{lemma}{Lemma}

 \usepackage{authblk}

\title{\bf \vskip 3cm $2$-Coupon Coloring of Cubic Graphs Containing $3$-Cycle or $4$-Cycle }
\author[]{S. Akbari\footnote{s\_akbari@sharif.edu}}
\author[]{M. Azimian\footnote{mahnooshmh22@gmail.com}}
\author[]{A. Fazli Khani\footnote{ar.fazlikhani@gmail.com}}
\author[]{B. Samimi\footnote{behrad.s138282@gmail.com}}
\author[]{E. Zahiri\footnote{ez1382@gmail.com}}
\affil[]{\small Department of Mathematical Sciences, Sharif University of Technology, Tehran, Iran}

\date{}							% Activate to display a given date or no date

\begin{document}
\maketitle
\begin{sloppypar}
\begin{abstract}
Let $G$ be a graph. A total dominating set in a graph $G$ is a set $S$ of vertices of $G$ such that every vertex in $G$ is adjacent to a vertex in $S$. Recently, the following question was proposed:\\
``Is it true that every connected cubic graph containing a $3$-cycle has two vertex disjoint total dominating sets?"\\
In this paper, we give a negative answer to this question. Moreover, we prove that if we replace $3$-cycle with $4$-cycle the answer is affirmative. This implies every connected cubic graph containing a diamond (the complete graph of order $4$ minus one edge) as a subgraph can be partitioned into two total dominating sets, a result that was proved in $2017$. 
\end{abstract}

\baselineskip=0.30in

\section {Introduction}
Throughout this paper, all graphs are simple that is with no loop and multiple edges. Let $G$ be a graph. We denote the vertex set and the edge set of $G$ by $V(G)$ and $E(G)$, respectively. For $v \in V(G)$, $d(v)$ denotes the degree of $v$. A vertex $v$ is called an {\it $i$-vertex} if $d(v)=i$.  A {\it total dominating set} in a graph $G$ is a set $S \subseteq V(G)$ such that every vertex in $G$ is adjacent to a vertex in $S$.
The study of cubic graphs whose vertex set can be partitioned into two total dominating sets is an attractive topic and many authors have investigated this problem.  A {\it $k$-coloring} of a graph $G$ is an assignment of colors from the set $[k] = \left\{0, \ldots , k-1 \right\}$ to the vertices of $G$.
A {\it $k$-coupon} coloring of a graph $G$ without isolated vertices is a $k$-coloring of $G$ such that the neighborhood of every vertex of $G$ contains vertices of all colors from $[k]$. The maximum $k$ for which a $k$-coupon coloring exists is called the {\it coupon coloring number} of $G$. We say that a vertex has {\it coupon property} if all $k$ colors are appeared in the neighbors of that vertex.
The concept of $k$-coupon coloring of graphs has been studied by several authors, for instance, see [\ref{1}], [\ref{4}] and [\ref{5}].\\
The {\it open neighborhood} of a vertex $v$ in  $G$ denoted by $N_G(v)$ is the set of all vertices adjacent to $v$. In this paper for any $S \subseteq V(G)$, $G[S]$ denotes the subgraph of $G$ induced by $S$.
A {\it $\left\{ 1,2 \right\} $-factor} is a spanning subgraph of $G$ in which each component is either $1$-regular or $2$-regular.
We use $C_n$ and $P_n$, for the cycle and the path of order $n$, respectively. Also, $C_n$ is called an {\it $n$-cycle}. A vertex $v$ is called a {\it good vertex} or a {\it bad vertex}, if it has or has not the coupon property, respectively.
It is not hard to see that a $2$-coupon coloring of a graph is equivalent to the vertex partitioning of a graph into two total dominating sets. In [\ref{2}] the following interesting question was proposed:\\
``{\it Is it true that every connected cubic graph containing a $3$-cycle has two vertex disjoint total dominating sets?}"\\
Here, we construct a connected cubic graph of order $60$ containing a $3$-cycle whose vertex set cannot be partitioned into two total dominating sets.
The complete graph of order $4$ minus one edge is called a {\it diamond}. In [\ref{2}] it is shown that the vertex set of a cubic graph containing a diamond as a subgraph can be partitioned into two total dominating sets.
In this paper, we strengthen this result by showing that the vertex set of every cubic graph containing a $4$-cycle can be partitioned into two total dominating sets.

For a graph $G$, we use the symbol $G^*$ for a bipartite graph that is constructed from $G$ in the following way:\\
Let $V(G)=\left\{ v_1, \ldots , v_n \right\}$.
Form a bipartite cubic graph $G^*=(U,W)$ of order $2n$, where $U= \left\{ u_1, \ldots , u_n \right\}$ and $W= \left\{ w_1, \ldots , w_n \right\}$. Join $u_i$ to $w_j$ and $u_j$ to $w_i$ if and only if $v_iv_j \in E(G)$.
\newpage
\section{Main Results}
In this paper, we denote the Heawood graph by $\mathcal{H}$.
\begin{figure}[htp]
\centering
\begin{tikzpicture}[line cap=round,line join=round,>=triangle 45,x=1cm,y=1cm, scale=0.6]
\draw [line width=2pt] (0.21,1.8) circle (4cm);
\draw [line width=2pt] (2.70917496591696,4.923159376294134)-- (1.093574220842098,-2.1011916892487195);
\draw [line width=2pt] (3.8167669077788298,3.5295179886175045)-- (-3.3909739976776097,3.5415470903911093);
\draw [line width=2pt] (4.209994429461842,1.793324352149031)-- (-2.2891749659169593,-1.3231593762941343);
\draw [line width=2pt] (3.8109739976776082,0.058452909608889714)-- (-0.6735742208421005,5.701191689248719);
\draw [line width=2pt] (2.698736502605444,-1.3314837730058924)-- (-3.7899944294618426,1.8066756478509678);
\draw [line width=2pt] (-0.68659077168571,-2.0982207464595977)-- (-2.2787365026054465,4.931483773005891);
\draw [line width=2pt] (-3.3967669077788294,0.07048201138249421)-- (1.1065907716857075,5.698220746459598);
\begin{scriptsize}
\draw [fill=black] (1.1065907716857075,5.698220746459598) circle (5pt);
\draw [fill=black] (-0.6735742208421005,5.701191689248719) circle (5pt);
\draw [fill=black] (-2.2787365026054465,4.931483773005891) circle (5pt);
\draw [fill=black] (-3.3909739976776097,3.5415470903911093) circle (5pt);
\draw [fill=black] (-3.7899944294618426,1.8066756478509678) circle (5pt);
\draw [fill=black] (-3.3967669077788294,0.07048201138249421) circle (5pt);
\draw [fill=black] (-2.2891749659169593,-1.3231593762941343) circle (5pt);
\draw [fill=black] (-0.68659077168571,-2.0982207464595977) circle (5pt);
\draw [fill=black] (1.093574220842098,-2.1011916892487195) circle (5pt);
\draw [fill=black] (2.698736502605444,-1.3314837730058924) circle (5pt);
\draw [fill=black] (3.8109739976776082,0.058452909608889714) circle (5pt);
\draw [fill=black] (4.209994429461842,1.793324352149031) circle (5pt);
\draw [fill=black] (3.8167669077788298,3.5295179886175045) circle (5pt);
\draw [fill=black] (2.70917496591696,4.923159376294134) circle (5pt);
\end{scriptsize}
\end{tikzpicture}
\caption*{The Heawood graph}
\end{figure}
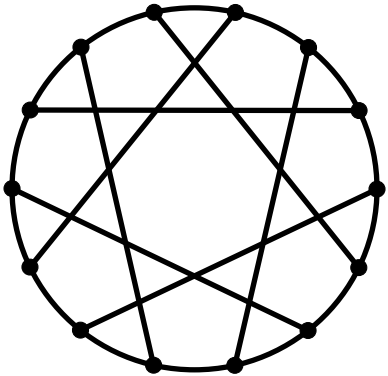

Note that $\mathcal{H} = (X, Y)$ is a vertex transitive bipartite cubic graph.

We start this section with the following lemma.
\begin{lemma}\label{lemma1}
In any $2$-coloring of $\mathcal{H}$, there are at least two bad vertices.
\end{lemma}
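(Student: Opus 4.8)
The plan is to exploit the identification of $\mathcal{H}$ with the incidence graph of the Fano plane $\mathrm{PG}(2,2)$. Write $\mathcal{H}=(X,Y)$ with $|X|=|Y|=7$, where $X$ plays the role of the points and $Y$ of the lines, so that for each $y\in Y$ the neighborhood $N_{\mathcal H}(y)$ is the $3$-element set of points lying on the line $y$, and the seven sets $\{N_{\mathcal H}(y):y\in Y\}$ are precisely the lines of the Fano plane on the ground set $X$. Dually, for $x\in X$ the set $N_{\mathcal H}(x)\subseteq Y$ consists of the three lines through the point $x$; since the Fano plane is self-dual (equivalently, since $\mathcal{H}$ admits an automorphism interchanging $X$ and $Y$), the seven sets $\{N_{\mathcal H}(x):x\in X\}$ again form a copy of the Fano plane, now on the ground set $Y$.

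Fix a $2$-coloring $c\colon V(\mathcal H)\to\{0,1\}$. The two sides of the problem decouple: a vertex $y\in Y$ is bad exactly when $N_{\mathcal H}(y)$ is monochromatic under $c|_{X}$, and a vertex $x\in X$ is bad exactly when $N_{\mathcal H}(x)$ is monochromatic under $c|_{Y}$ (with only two colors and exactly three neighbors, failing the coupon property means all three neighbors receive the same color). Hence it suffices to prove the single statement: every $2$-coloring of the seven points of the Fano plane leaves at least one line monochromatic. Applying this to $c|_{X}$ yields a bad vertex in $Y$, and applying it to $c|_{Y}$ (using the dual copy of the Fano plane carried by $X$) yields a bad vertex in $X$. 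These two bad vertices lie in different parts of $\mathcal{H}$, hence are distinct, which gives the required two bad vertices.

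It remains to establish that the Fano plane is not properly $2$-colorable, i.e.\ it fails property B; this is the one place where genuine work is needed. I would argue by contradiction: given a $2$-coloring of the points with no monochromatic line, some color class, say the red one, has at least four points, since $3+3<7$. If red has exactly four points, I count the lines meeting red in exactly two points; each such line has a blue third point, and a short incidence count (using that each blue point lies on three lines which partition the remaining six points into pairs) forces every blue point to be the third point of exactly two such lines, which in turn forces the three blue points to be collinear — a monochromatic blue line, contradiction. If red has five or more points, then there are at most two blue points, and inclusion–exclusion on the lines through those blue points shows at least one line avoids all blue points, giving an all-red line — again a contradiction. This completes the proof; alternatively, one may simply cite the classical non-$2$-colorability of $\mathrm{PG}(2,2)$.
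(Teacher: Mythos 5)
Your proof is correct in substance but takes a genuinely different route from the paper's. The paper's argument is a short bootstrap: it invokes [2, Observation 7] (every $2$-coloring of $\mathcal{H}$ has at least one bad vertex, i.e.\ $\mathcal{H}$ has no $2$-coupon coloring), supposes for contradiction that some coloring has exactly one bad vertex, places it WLOG in $Y$, and then uses a part-swapping automorphism (available by vertex-transitivity) to copy the coloring of the ``good-for-$X$'' side onto $X$; since the goodness of each side depends only on the coloring of the other side, this yields a $2$-coupon coloring of $\mathcal{H}$, contradicting the same Observation. You make that side-decoupling explicit but replace both the external citation and the transitivity trick by identifying $\mathcal{H}$ with the incidence graph of the Fano plane and reducing the lemma, via self-duality, to the classical fact that $\mathrm{PG}(2,2)$ admits no proper $2$-coloring, applied once to each side. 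Your version is self-contained (it in effect reproves Observation 7) and isolates the combinatorial reason the Heawood graph fails, at the cost of length; the paper's is shorter but leans on an outside result and on knowing the automorphism group.

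One small slip in your sketch of the $4$--$3$ case: the double count gives $a+b=7$ and $a+2b=12$, hence $b=5$ lines meeting the red class in exactly two points, while each blue point lies on exactly \emph{one} such line (its three lines pair the two other blue points with red points, leaving a single all-red pair), so these lines number $3$, and the contradiction is $5\neq 3$ rather than ``exactly two such lines per blue point'' forcing collinearity. This does not affect the soundness of the approach, and in any case citing the failure of property B for the Fano plane suffices, as you note.
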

\begin{proof}
 By contradiction and using [\ref{2}, Observation $7$], there exists a $2$-coloring of $\mathcal{H} = (X, Y)$ with exactly one bad vertex.
With no loss of generality assume that $X$ has no bad vertex. Since $\mathcal{H}$ is vertex transitive, one can color the part $Y$ such as $X$ to obtain a $2$-coupon coloring for $\mathcal{H}$, a contradiction. The proof is complete.
\end{proof}

\begin{lemma}\label{lemma2}
 Let $e=uv \in E(\mathcal{H})$. If there is a $2$-coloring for $\mathcal{H} \backslash e$ such that all vertices in $V(\mathcal{H})\backslash \left\{ u, v \right\}$ are good, then $u$ and $N_{\mathcal{H} \backslash e}(v)$ have the same color.
\end{lemma}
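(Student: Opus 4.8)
The plan is to re-attach the edge $e$ and use Lemma~\ref{lemma1} to pin down exactly which vertices are bad. Observe first that a $2$-coloring is merely an assignment of colors to $V(\mathcal{H})$, so the given $2$-coloring of $\mathcal{H}\backslash e$ is simultaneously a $2$-coloring of $\mathcal{H}$; the only difference between the two graphs is the presence of the edge $uv$.

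Next I would compare the good/bad status of each vertex in the two graphs. Passing from $\mathcal{H}\backslash e$ to $\mathcal{H}$ changes only the open neighborhoods of $u$ and $v$ (each gains exactly one neighbor), so for every $w\in V(\mathcal{H})\backslash\left\{u,v\right\}$ we have $N_{\mathcal{H}}(w)=N_{\mathcal{H}\backslash e}(w)$. By hypothesis such a $w$ is good in $\mathcal{H}\backslash e$, hence good in $\mathcal{H}$. Therefore, with respect to this coloring, the only vertices of $\mathcal{H}$ that can possibly be bad are $u$ and $v$.

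Now I would invoke Lemma~\ref{lemma1}: every $2$-coloring of $\mathcal{H}$ has at least two bad vertices. Together with the previous step this forces $u$ and $v$ to be precisely the two bad vertices of $\mathcal{H}$ under this coloring. Finally, unfolding the definition of ``$v$ is bad'': all vertices of $N_{\mathcal{H}}(v)$ receive the same color. Since $\mathcal{H}$ is a simple cubic graph, $N_{\mathcal{H}}(v)=\left\{u\right\}\cup N_{\mathcal{H}\backslash e}(v)$ with $u\notin N_{\mathcal{H}\backslash e}(v)$, so $u$ and the two vertices of $N_{\mathcal{H}\backslash e}(v)$ all carry a common color, which is exactly the assertion. (Symmetrically, $u$ being bad yields the analogous statement for $v$ and $N_{\mathcal{H}\backslash e}(u)$.)

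There is no serious obstacle here; the only point needing care is the monotonicity bookkeeping in the second step — restoring an edge can never turn a good vertex into a bad one, and it introduces at most the two ``new'' potentially-bad vertices $u$ and $v$ — after which the conclusion is immediate from Lemma~\ref{lemma1}.
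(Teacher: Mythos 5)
Your proof is correct and uses the same key idea as the paper: restore the edge $uv$, observe that only $u$ and $v$ can be bad in the resulting $2$-coloring of $\mathcal{H}$, and invoke Lemma~\ref{lemma1} to force both to be bad, which for $v$ unpacks to exactly the claimed monochromaticity of $\{u\}\cup N_{\mathcal{H}\backslash e}(v)$. The paper phrases this as a contradiction argument on a single differently-colored neighbor $w$ of $v$, but the content is identical.
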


\begin{proof}
By contradiction assume that $w \in N_{\mathcal{H} \backslash e}(v)$  and the color of $u$ and $w$ are different. It is straightforward  to see that it is a $2$-coloring of $\mathcal{H}$ in which every vertex in $V(\mathcal{H})\backslash {u}$ is good which contradicts Lemma \ref{lemma1}. The proof is complete.
\end{proof}

\begin{theorem}\label{theorem1}
There exists a connected cubic graph containing a $3$-cycle with no  $2$-coupon coloring.
\end{theorem}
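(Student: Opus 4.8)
The plan is to use the rigidity of the Heawood graph, as packaged in Lemma~\ref{lemma2}. Suppose a cubic graph $G$ contains a copy of $\mathcal{H}\backslash e$ (with $e=uv$) as a subgraph in which $u,v$ are the only vertices having a neighbour outside the copy, and suppose $G$ has a $2$-coupon colouring $c$. Then every vertex of the copy other than $u,v$ has all three neighbours inside the copy, hence is good, so $c$ restricts to a $2$-colouring of $\mathcal{H}\backslash e$ with all of $V(\mathcal{H})\backslash\{u,v\}$ good; by Lemma~\ref{lemma2} applied to $u,v$ and then to $v,u$, the two in-copy neighbours of $u$ are both coloured $c(v)$ and the two in-copy neighbours of $v$ are both coloured $c(u)$. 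Since $u$ and $v$ must themselves be good in $G$, the outside neighbour of $u$ gets colour $1-c(v)$ and the outside neighbour of $v$ gets colour $1-c(u)$. As a test run of this idea, chain $k$ such blocks with ports $(u_i,v_i)$ and add the edges $v_iu_{i+1}$ cyclically: the above forces $c(v_{i-1})=1-c(v_i)$ for all $i$, so $(c(v_i))_i$ would have to alternate around a $k$-cycle, which is impossible for odd $k$. Thus for odd $k$ this graph admits no $2$-coupon colouring — but it has girth $6$ and hence no $3$-cycle, which is the gap to close.

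To introduce a $3$-cycle I would retain most of the cyclic chain but splice in, in place of one of the joining edges (or attached through some of the ports), a small cubic connector $C$ that contains a triangle, choosing the number of blocks and the size of $C$ so that the total order is $4\cdot 14+4=60$ as claimed in the abstract, with $G$ connected. One then repeats the bookkeeping: Lemma~\ref{lemma2} fixes every in-block colour as above, the single edges between ports propagate the ``opposite colour'' relations, and the condition that the (few) vertices of $C$ be good contributes a handful of further equations; tracing these relations around $G$ one checks that the resulting linear system over $\mathbb{F}_2$ is still inconsistent, so that some vertex is forced to have all three neighbours of one colour and is therefore bad. This contradiction shows $G$ has no $2$-coupon colouring, and as $G$ is connected, cubic, and contains a $3$-cycle, the theorem follows.

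The hard part is exactly this last step, namely choosing $C$ and the wiring so that the parity argument survives. In isolation a block $\mathcal{H}\backslash e$ is ``transparent'': both colour patterns on the two outside neighbours of its ports can be completed to a valid internal colouring, so the impossibility cannot be localised in any single block — it must come from the global topology, an odd closed walk of parity constraints. Consequently $C$ has to be attached so that it behaves, for colour propagation, like an odd-length link rather than buffering the parity, while remaining cubic and carrying a triangle; exhibiting an explicit such $C$ and verifying that the resulting $60$-vertex graph genuinely has no $2$-coupon colouring is a finite but fiddly case check, and is where essentially all the content of the theorem lies.
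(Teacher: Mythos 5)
Your strategy is exactly the one the paper uses: treat each $\mathcal{H}\backslash e$ block as a rigid ``gadget'' via Lemma~\ref{lemma2} (the two in-block neighbours of $u$ are forced to colour $c(v)$ and vice versa, so goodness of the ports forces the colours of their outside neighbours), and then wire several such blocks to a small triangle-carrying connector so that the forced colours become inconsistent. However, as you yourself say, ``exhibiting an explicit such $C$ and verifying that the resulting $60$-vertex graph genuinely has no $2$-coupon colouring \ldots\ is where essentially all the content of the theorem lies'' --- and that is precisely the part you have not done. The theorem is an existence statement; without a concrete connector, a concrete wiring, and a completed contradiction, this is a plan rather than a proof. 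For the record, the paper's witness uses \emph{four} blocks $\mathcal{H}_i\backslash e_i$ with ports $u_i,v_i$ and the connector $K$ consisting of a triangle $x,y,z$ together with a vertex $t$ adjacent to $x$; the wiring is $t\sim v_1$, $t\sim v_2$, $y\sim u_3$, $z\sim u_4$, $u_1\sim u_2$, $v_3\sim v_4$. The contradiction is then a short chase: $c(v_1)=0$ forces $c(u_2)=1$, then $c(t)=0$, then $c(v_2)=0$, then $c(x)=1$; goodness of $x$ forces (up to symmetry) $c(y)=1$, which forces $c(v_3)=0$ and then $c(u_4)=1$, leaving $z$ with all three neighbours coloured $1$.

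Two further cautions. First, your closing picture of ``a linear system over $\mathbb{F}_2$'' does not quite match what has to be checked: the goodness condition at a connector vertex of degree $3$ (``the three neighbour-colours are not all equal'') is not an affine constraint, and indeed the paper's argument has a genuine disjunction at the vertex $x$ ($c(y)=1$ or $c(z)=1$) that must be handled by a symmetry/WLOG step, not by parity bookkeeping. Second, your preliminary ``test run'' (an odd cyclic chain of blocks joined by edges $v_iu_{i+1}$) does not actually work as stated: in that chain the outside neighbour of $v_i$ is $u_{i+1}$ and the outside neighbour of $u_{i+1}$ is $v_i$, so the forced relations are $c(u_{i+1})=1-c(u_i')$-type constraints linking a port's colour to the \emph{other} port's colour of the same block, and the colours $c(u_i),c(v_i)$ of the ports themselves are not pinned down by Lemma~\ref{lemma2} at all; you would need to check, not merely assert, that the constraints close up into an odd contradiction. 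This is another reason the explicit verification cannot be waved away.
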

\begin{proof}
Consider $4$ vertex disjoint copies of $\mathcal{H}$ and denote them by $\mathcal{H}_1, \mathcal{H}_2, \mathcal{H}_3, \mathcal{H}_4$.
Let $e_i = u_iv_i,$ for $i = 1, \ldots , 4$.
Now, we would like to form a cubic graph containing a triangle. Let $K$ be the following graph:
\begin{center}
\begin{tikzpicture}[line cap=round,line join=round,>=triangle 45,x=1cm,y=1cm]
\draw [line width=2pt] (-0.47355268078765284,-1.960472469103462)-- (0.2772256583516088,-2.801665124109507);
\draw [line width=2pt] (-1.2137909879434032,-2.810954947450909)-- (0.2772256583516088,-2.801665124109507);
\draw [line width=2pt] (-0.47355268078765284,-1.960472469103462)-- (-1.2137909879434032,-2.810954947450909);
\draw [line width=2pt] (-0.48020914610687515,-0.8921097853682758)-- (-0.47355268078765284,-1.960472469103462);
\draw (-1.046614282666777,-1.5350283416247968) node[anchor=north west,draw=none,fill=none,rectangle,inner sep=5pt] {$\mathit{x}$};
\draw (0.28069166093878595,-2.4365584574546766) node[anchor=north west,draw=none,fill=none,rectangle,inner sep=5pt] {$\mathit{z}$};
\draw (-1.7752780243303032,-2.4365584574546766) node[anchor=north west,draw=none,fill=none,rectangle,inner sep=5pt] {$\mathit{y}$};
\draw (-0.6794375773901508,-0.25) node[anchor=north west,draw=none,fill=none,rectangle,inner sep=5pt] {$\mathit{t}$};
\begin{scriptsize}
\draw [fill=black] (-0.48020914610687515,-0.8921097853682758) circle (3pt);
\draw [fill=black] (-0.47355268078765284,-1.960472469103462) circle (3pt);
\draw [fill=black] (-1.2137909879434032,-2.810954947450909) circle (3pt);
\draw [fill=black] (0.2772256583516088,-2.801665124109507) circle (3pt);
\end{scriptsize}
\end{tikzpicture}
\end{center}
Consider $(\bigcup_{i=1}^{4}  \mathcal{H}_i\backslash e_i) \cup  K$.
Join $t$, $y$, and $z$ to $\left\{ v_1, v_2 \right\}$, $u_3$, and $u_4$, respectively. Also join $u_1$ and $v_4$ to $u_2$ and $v_3$, respectively and call the resultant cubic graph by $G$.
\begin{center}
\begin{tikzpicture}[line cap=round,line join=round,>=triangle 45,x=1cm,y=1cm,scale=0.75]
\draw [line width=2pt] (1.7351947457366965,1.0544304209476216) circle (1.5cm);
\draw [line width=2pt] (-2.719696054048024,1.026674091977) circle (1.5cm);
\draw [line width=2pt] (-0.47355268078765284,-1.960472469103462)-- (0.2772256583516088,-2.801665124109507);
\draw [line width=2pt] (-1.2137909879434032,-2.810954947450909)-- (0.2772256583516088,-2.801665124109507);
\draw [line width=2pt] (-0.47355268078765284,-1.960472469103462)-- (-1.2137909879434032,-2.810954947450909);
\draw [line width=2pt] (-0.48020914610687515,-0.8921097853682758)-- (-0.47355268078765284,-1.960472469103462);
\draw [line width=2pt] (-1.3000367311971461,1.5109956866252099)-- (0.3096107137495623,1.5210246426684917);
\draw [line width=2pt] (-0.48020914610687515,-0.8921097853682758)-- (2.014489312300446,-0.419338402541316);
\draw [line width=2pt] (-0.48020914610687515,-0.8921097853682758)-- (-2.980604929642854,-0.4504604850456355);
\draw [line width=2pt] (-2.6823001073120025,-4.975375359154546) circle (1.5cm);
\draw [line width=2pt] (1.772590692472719,-4.947619030183925) circle (1.5cm);
\draw [line width=2pt] (0.2772256583516088,-2.801665124109507)-- (2.288971964400125,-3.5393042990237937);
\draw [line width=2pt] (-1.2137909879434032,-2.810954947450909)-- (-3.2161897014737155,-3.573604371708364);
\draw [line width=2pt] (0.27259419650785754,-4.944376794360377)-- (-1.1824604651757114,-4.9534425554923605);
\draw [line width=2pt,dash pattern=on 2pt off 3pt] (0.3096107137495623,1.5210246426684917)-- (2.014489312300446,-0.419338402541316);
\draw [line width=2pt,dash pattern=on 2pt off 3pt] (-1.3000367311971461,1.5109956866252099)-- (-2.980604929642854,-0.4504604850456355);
\draw [line width=2pt,dash pattern=on 2pt off 3pt] (-3.2161897014737155,-3.573604371708364)-- (-1.1824604651757114,-4.9534425554923605);
\draw [line width=2pt,dash pattern=on 2pt off 3pt] (2.288971964400125,-3.5393042990237937)-- (0.27259419650785754,-4.944376794360377);
\draw (-0.55,2.25) node[anchor=north west,draw=none,fill=none,rectangle,inner sep=5pt] {$\mathit{u_1}$};
\draw (-1.5081013190536769,2.25) node[anchor=north west,draw=none,fill=none,rectangle,inner sep=5pt] {$\mathit{u_2}$};
\draw (1.6,-0.442032729129506244) node[anchor=north west,draw=none,fill=none,rectangle,inner sep=5pt] {$\mathit{v_1}$};
\draw (-3.4483597957115504,-0.442032729129506244) node[anchor=north west,draw=none,fill=none,rectangle,inner sep=5pt] {$\mathit{v_2}$};
\draw (1.992329475376112,-2.7) node[anchor=north west,draw=none,fill=none,rectangle,inner sep=5pt] {$\mathit{u_4}$};
\draw (-0.55,-4.2) node[anchor=north west,draw=none,fill=none,rectangle,inner sep=5pt] {$\mathit{v_4}$};
\draw (-3.669373784891187,-2.7) node[anchor=north west,draw=none,fill=none,rectangle,inner sep=5pt] {$\mathit{u_3}$};
\draw (-1.4,-4.2) node[anchor=north west,draw=none,fill=none,rectangle,inner sep=5pt] {$\mathit{v_3}$};
\draw (-1.146614282666777,-1.3350283416247968) node[anchor=north west,draw=none,fill=none,rectangle,inner sep=5pt] {$\mathit{x}$};
\draw (0.10069166093878595,-1.9865584574546766) node[anchor=north west,draw=none,fill=none,rectangle,inner sep=5pt] {$\mathit{z}$};
\draw (-1.6752780243303032,-1.9365584574546766) node[anchor=north west,draw=none,fill=none,rectangle,inner sep=5pt] {$\mathit{y}$};
\draw (1.0450666112135278,1.3796266250293113) node[anchor=north west,draw=none,fill=none,rectangle,inner sep=5pt] {$\mathcal{H}_1 \backslash e_1$};
\draw (1.0064890284359595,-4.643993639389179) node[anchor=north west,draw=none,fill=none,rectangle,inner sep=5pt] {$\mathcal{H}_4 \backslash e_4$};
\draw (-3.8,1.3796266250293113) node[anchor=north west,draw=none,fill=none,rectangle,inner sep=5pt] {$\mathcal{H}_2 \backslash e_2$};
\draw (-3.7754934215451975,-4.643993639389179) node[anchor=north west,draw=none,fill=none,rectangle,inner sep=5pt] {$\mathcal{H}_3 \backslash e_3$};
\draw (-0.6794375773901508,0) node[anchor=north west,draw=none,fill=none,rectangle,inner sep=5pt] {$\mathit{t}$};
\begin{scriptsize}
\draw [fill=black] (-0.48020914610687515,-0.8921097853682758) circle (3pt);
\draw [fill=black] (-0.47355268078765284,-1.960472469103462) circle (3pt);
\draw [fill=black] (-1.2137909879434032,-2.810954947450909) circle (3pt);
\draw [fill=black] (0.2772256583516088,-2.801665124109507) circle (3pt);
\draw [fill=black] (-1.3000367311971461,1.5109956866252099) circle (3pt);
\draw [fill=black] (0.3096107137495623,1.5210246426684917) circle (3pt);
\draw [fill=black] (2.014489312300446,-0.419338402541316) circle (3pt);
\draw [fill=black] (-2.980604929642854,-0.4504604850456355) circle (3pt);
\draw [fill=black] (-3.2161897014737155,-3.573604371708364) circle (3pt);
\draw [fill=black] (-1.1824604651757114,-4.9534425554923605) circle (3pt);
\draw [fill=black] (0.27259419650785754,-4.944376794360377) circle (3pt);
\draw [fill=black] (2.288971964400125,-3.5393042990237937) circle (3pt);
\end{scriptsize}
\end{tikzpicture}
\end{center}
We claim that $G$ has no $2$-coupon coloring.\\
By contradiction assume that $c$ is a $2$-coupon coloring for $G$.
With no loss of generality suppose that $c(v_1) = 0$, where $c(v_1)$ denotes the color of $v_1$.
By Lemma \ref{lemma2}, every vertex in $N_{\mathcal{H}_1 \backslash e_1}(u_1)$ has color $0$. Since $u_1$ is a good vertex, $c(u_2) = 1$. 
Thus by Lemma \ref{lemma2}, every vertex in $N_{\mathcal{H}_2 \backslash e_2}(v_2)$ has color $1$. Since $v_2$ is a good vertex, one can see that $c(t)=0$. By Lemma \ref{lemma2}, two vertices of $N_{\mathcal{H}_1 \backslash e_1}(v_1)$ have the same color, and since $v_1$ is a good vertex, each vertex of $N_{\mathcal{H}_1}(v_1)$ has color $1$. On the other hand, by Lemma \ref{lemma2}, since $u_2$ is a good vertex, every vertex of $N_{\mathcal{H}_2}(u_2)$ has color $0$.
Now, since $t$ is good, we have $c(x)=1$.  Since $x$ is good, we find that $c(y)=1$ or $c(z)=1$. With no loss of generality assume that $c(y)=1$. Since $u_3$ is good by Lemma \ref{lemma2}, every vertex of $N_{\mathcal{H}_3}(u_3)$ has color $0$. Since $v_4$ is good, by Lemma \ref{lemma2}, we find that $c(u_4)=1$ which contradicts $z$ is good. The proof is complete.
\end{proof}

Now, we have the following corollary.
\begin{corollary}\label{coro2}
There exists a connected cubic graph containing a triangle that has no two vertex disjoint total dominating sets.
\end{corollary}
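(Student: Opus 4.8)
The plan is to deduce this immediately from Theorem \ref{theorem1} together with the equivalence, noted in the introduction, between $2$-coupon colorings and vertex partitions into two total dominating sets. Observe first that a $3$-cycle is the same thing as a triangle, so the graph $G$ constructed in the proof of Theorem \ref{theorem1} is already a connected cubic graph that contains a triangle; it remains only to translate ``no $2$-coupon coloring'' into ``no two vertex-disjoint total dominating sets''.

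First I would make the translation precise in both directions. Given a $2$-coloring $c\colon V(G)\to\{0,1\}$, a vertex $v$ is good exactly when $v$ has a neighbor of color $0$ and a neighbor of color $1$; hence $c$ is a $2$-coupon coloring if and only if both color classes $c^{-1}(0)$ and $c^{-1}(1)$ are total dominating sets, and these classes are disjoint. Conversely, if $S_0$ and $S_1$ are disjoint total dominating sets of $G$, one colors the vertices of $S_0$ with $0$, those of $S_1$ with $1$, and the remaining vertices arbitrarily; since every vertex already has a neighbor in $S_0\subseteq c^{-1}(0)$ and a neighbor in $S_1\subseteq c^{-1}(1)$, this is a $2$-coupon coloring. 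Thus $G$ has two vertex-disjoint total dominating sets if and only if $G$ has a $2$-coupon coloring.

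Then I would conclude: by Theorem \ref{theorem1} the graph $G$ has no $2$-coupon coloring, so by the equivalence just established it has no two vertex-disjoint total dominating sets, while $G$ is connected, cubic, and contains a triangle. This proves the corollary.

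There is essentially no obstacle here. The only point that needs a moment of care is the ``if'' direction of the equivalence when $S_0\cup S_1\neq V(G)$: one must check that the leftover vertices can be colored with either color without destroying the coupon property, which holds because $G$ is cubic and hence has no isolated vertices, so every vertex already sees both $S_0$ and $S_1$ among its neighbors regardless of how the remaining vertices are colored.
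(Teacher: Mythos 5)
Your proposal is correct and follows essentially the same route as the paper: take the graph $G$ from Theorem \ref{theorem1}, and observe that two disjoint total dominating sets $S_0$, $S_1$ would yield a $2$-coupon coloring by coloring $S_0$ with $0$, $S_1$ with $1$, and the leftover vertices arbitrarily, contradicting Theorem \ref{theorem1}. Your extra remark that the leftover vertices can be colored freely (since every vertex already sees both $S_0$ and $S_1$) is exactly the point the paper's proof implicitly relies on.
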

\begin{proof}
Consider the graph $G$ stated in Theorem \ref{theorem1}. If $G$ has two vertex disjoint total dominating sets $S_0$ and $S_1$, then color all vertices of $S_0$ and $S_1$ by $0$ and $1$, respectively, and color $V(G)\backslash(S_0 \cup S_1)$ by $1$. Obviously, this is a $2$-coupon coloring of $G$ which contradicts Theorem \ref{theorem1}.
\end{proof} 

\begin{theorem}\label{theorem3}
    Let $G$ be a connected cubic graph and $S \subsetneq V(G)$. If $G[S]$ has a $2$-coupon coloring, and $G \backslash S$ has a $\left\{ 1,2 \right\} $-factor, then $G$ has a $2$-coupon coloring.
\end{theorem}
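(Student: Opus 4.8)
The plan is to keep the given $2$-coupon colouring $c_S$ on $S$ and to extend it across $T:=V(G)\setminus S$. The only thing I need from $S$ is that every $v\in S$ already sees both colours inside $N_G(v)\cap S$, hence stays good however $T$ is coloured; so it is enough to $2$-colour $T$ so that every vertex of $T$ sees both colours in $G$. Each $v\in T$ has three $G$-neighbours and, in the $\{1,2\}$-factor $F$ of $G[T]$, has $F$-degree $1$ or $2$. I would colour $T$ one $F$-component at a time, the ``help'' for a vertex coming from its at most two $G$-neighbours not lying on its $F$-component, which are in $S$ or in other components.

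For $v$ on a cycle component $C$ of $F$, goodness says that the two $C$-neighbours of $v$ together with its further $G$-neighbour are not all one colour; in particular $v$ is automatically good once its two $C$-neighbours get different colours. Making that hold for \emph{every} vertex of $C=v_1\cdots v_\ell$ is exactly a proper $2$-colouring of the graph on $C$ whose edges are the pairs $v_iv_{i+2}$, which is a single $\ell$-cycle if $\ell$ is odd and two $(\ell/2)$-cycles if $\ell$ is even, hence possible if and only if $4\mid\ell$. So I would colour every $4$-divisible cycle by the pattern $0,0,1,1,0,0,1,1,\dots$, making it entirely good and self-contained. A cycle with $\ell$ odd (respectively $\ell\equiv 2\pmod 4$) can instead be coloured leaving only one (respectively two) \emph{deficient} vertices, each needing only its one remaining $G$-neighbour to get the colour opposite to that of its two (equal-coloured) $C$-neighbours; moreover one is free to slide the deficiency around $C$ and to flip which colour it is stuck on. A $K_2$ component $\{u,v\}$ has both ends deficient: once $c(u),c(v)$ are chosen, ``$u$ is good'' becomes an OR-condition on the colours of $u$'s two remaining $G$-neighbours, and symmetrically for $v$.

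It remains to repair all deficient vertices simultaneously. I would first carry out the forced assignments — for instance, a $K_2$-endpoint both of whose remaining neighbours lie in $S$ with a common colour forces the colour of its $F$-partner — propagate these, and then view what is left as a small clause/disequality system on the still-free vertices of $T$: OR-clauses on pairs coming from the $K_2$ components, disequalities coming from the deficient cycle-vertices, and the extra freedom of where to cut each non-$4$-divisible cycle and which of its two symmetric colourings to use there. The technical heart is to show this system is always satisfiable, and this is exactly where the cubic hypothesis and the precise form of a $\{1,2\}$-factor are used: each vertex of $T$ sits in only boundedly many conditions, so after removing the forced part the dependency graph is very sparse, and, exploiting the sliding freedom on each deficient cycle and the two-fold choice on each $K_2$, I expect one can satisfy everything by handling one residual component at a time. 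The main obstacle is therefore not conceptual but this bookkeeping — arranging the local repairs of all deficient components to be mutually compatible.
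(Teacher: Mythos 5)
Your local analysis of a single $F$-component is correct (the $0,0,1,1,\dots$ pattern works exactly when $4\mid\ell$, and otherwise one or two deficient vertices remain), but the proof stops precisely where the theorem's actual content begins. You explicitly defer the claim that the residual system of OR-clauses and disequalities on the deficient vertices is always satisfiable (``I expect one can satisfy everything\dots the main obstacle is\dots this bookkeeping''), and no argument is given for why the sliding freedom on each non-$4$-divisible cycle and the two-fold choice on each $K_2$ suffice. This is not mere bookkeeping: the external neighbour of a deficient vertex may lie in $S$ with a fixed colour, or may itself be a constrained vertex of another component, so the implications can chain and cycle through arbitrarily many components, and nothing in your sketch rules out an inconsistent cycle of forced assignments. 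A component-by-component greedy extension genuinely can get stuck, which is why some global mechanism is needed.

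The paper avoids this entirely by a different organization. It first reduces the non-bipartite case to the bipartite one via the bipartite double cover $G^*$ (checking that the $2$-coupon colouring of $G[S]$ and the $\{1,2\}$-factor both lift, and that a colouring of $G^*$ projects back). In the bipartite case it may assume $F$ has no even cycles (an even cycle can be replaced by a perfect matching of itself), so $F$ is a perfect matching; it then takes $S$ maximal, follows a longest walk from $S$ into $G\setminus S$ alternating between matching and non-matching edges, shows by a degree count that this walk must return to $S$, and colours the walk so that $S$ together with the walk's vertices is a strictly larger set still satisfying the hypotheses --- contradicting maximality. If you want to salvage your direct approach, you would need a comparable global device (an alternating-path or parity argument) to discharge the deficient vertices simultaneously; as written, the proposal is a plan with its central step unproven.
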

\begin{proof}
First assume that $G=(X,Y)$ is a connected bipartite cubic graph.
By contradiction assume that $S$ is the largest set of vertices of $G$ satisfying the assumption and $G\backslash S$ has a $\left\{ 1,2 \right\} $-factor $F$, and $G$ has no $2$-coupon coloring. With no loss of generality, one may assume that $F$ contains no even cycle because every even cycle has a perfect matching.\\ 
Since $G$ is connected, then there exists an edge $uv$ such that $u \in S$ and $v \in V(G\backslash S)$.
Let $A$ be the set of all edges of $P_{2}$-components of $F$. By a {\it good walk} in $G$, we mean a walk starting at $u$ whose edges are alternatively contained in $E(G)\backslash A$ and $A$ such that its first edge is $uv$ and each edge in $E(G)\backslash A$ appears at most once.\\
Let $W$ be the longest good walk in $G$. Suppose that $W: ue_1x_1e_2y_1e_3x_2e_4y_2 \ldots e_kw$, where $e_i$ is in $E(G)\backslash A$ and $A$ if $i$ is odd and even, respectively. Note that since $G$ is bipartite, we have $ \left\{ x_1, x_2,\ldots \right\} \cap \left\{ y_1, y_2 , \ldots \right\} = \varnothing$.\\
Since $W$ has the longest length among all good walks and for every $x_iy_i \in A$, $d(x_i)=d(y_i)=3$, it implies that the number of edges in $E(G)\backslash A$ incident with $x_i$ is the same as the number of edges in $E(G)\backslash A$ incident with $y_i$. Thus $W$ terminates with an edge in $E(G)\backslash A$. So $w \in S$ because if $w \in V(G \backslash S)$, then $w$ has an edge in $A$ and this contradicts $W$ has the longest length. Color $y_1$ by $1-c(u)$, where $c(u)$ is the color of $u$. If $y_i$ is not colored before, color $y_i$ by $1-c(y_{i-1})$, for $i=1, \ldots, \frac{k-1}{2}$. Let $i$ be an arbitrary index, $v=x_i$ and $e=vz \in A$. Suppose that we meet $e$ for the first time in $W$. Obviously, $z$ is still not colored and after we color $z$, $v$ becomes a good vertex. So every $x_i$ has coupon property. Define $c(x_{\frac{k-1}{2}})=1-c(w)$. Also if $x_j$ is not colored, define $c(x_j)=1-c(x_{j+1})$, for $j=\frac{k-1}{2}-1 , \ldots , 1$ and then with the previous argument, every $y_i$ becomes a good vertex. Note that every vertex in $S \cup V(W)$ has the coupon property and $G \backslash (S\cup V(W))$ has a $\left\{ 1,2 \right\} $-factor which contradicts the assumption.\\
Next, assume that $G$ is a connected non-bipartite cubic graph. Let $V(G)=\left\{ v_1, \ldots , v_n \right\}$. Consider the bipartite cubic graph $G^*$. Since $G$ contains an odd cycle, it is easy to see that $G^*$ is a connected bipartite cubic graph. With no loss of generality, suppose that $S= \left\{ v_1, \ldots , v_r\right\}$.
In the graph $G^*$, define $S’=\left\{ u_1, \ldots , u_r \right\}\cup \left\{ w_1, \ldots , w_r \right\}$. Assume that $c$ is a $2$-coupon coloring for $G[S]$. If we define $c(u_i)=c(w_i)=c(v_i)$, then clearly we obtain a $2$-coupon coloring for $G^*[S’]$. By assumption, $G\backslash V(S)$ has a $\left\{ 1,2 \right\} $-factor $F$. If $v_{i_1},v_{i_2},\ldots ,v_{i_{2t}}$ is an even cycle of $F$, then $u_{i_1}w_{i_2}u_{i_3}w_{i_4}\ldots w_{i_{2t}}$ and $w_{i_1}u_{i_2}w_{i_3}u_{i_4}\ldots u_{i_{2t}}$ are two vertex disjoint even cycles in $G^*$.
If $v_{i_1}v_{i_2}\ldots v_{i_{2t+1}}$ is an odd cycle in $G$, then $u_{i_1}w_{i_2}u_{i_3}w_{i_4}\ldots u_{i_{2t+1}}w_{i_1}$ is an even cycle of order $4t+2$ in $G^*$.  Moreover, if $v_iv_j$ is a $P_{2}$-component of $F$, then $u_iw_j$ and $u_jw_i$ form a  two matching edges in $G^*$. These imply that $H\backslash V(S’)$ has a $\left\{ 1,2 \right\}$-factor.\\
Now, by the previous case, $G^*$ has a $2$-coupon coloring $c$. If we define $c(v_i)=c(u_i)$, for $i=1,\ldots ,n$, then one can see that $c$ is a $2$-coupon coloring for $G$.
\end{proof}

Now, we have the following corollary.
\vspace{0.3cm}
\begin{corollary}\label{coro4}
    If $G$ is a connected cubic graph containing a $4$-cycle, then $G$ has a $2$-coupon coloring.
\end{corollary}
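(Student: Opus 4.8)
The plan is to reduce everything to Theorem~\ref{theorem3}. Let $C = v_1v_2v_3v_4v_1$ be a $4$-cycle in $G$ and set $S = V(C)$. Since $G$ is cubic, the induced graph $G[V(C)]$ is $C_4$, a diamond, or $K_4$; in the last case all four vertices already have degree $3$, so $G = K_4$, which has the $2$-coupon coloring $0,0,1,1$, and hence we may assume $|V(G)| \ge 6$. In the remaining two cases $G[V(C)]$ contains the spanning cycle $v_1v_2v_3v_4$, and colouring $v_1,v_2$ by $0$ and $v_3,v_4$ by $1$ gives every vertex of $G[V(C)]$ a neighbour of each colour along that cycle (a possible chord only adds neighbours); so $G[S]$ has a $2$-coupon coloring. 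By Theorem~\ref{theorem3} it now suffices to produce an $S$ (equal to $V(C)$ or slightly larger) with $G[S]$ $2$-coupon colourable and $G\setminus S$ possessing a $\{1,2\}$-factor.

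The quantitative ingredient I would use is that $C$ is only weakly attached to the rest of $G$: counting the $12$ edge-endpoints at $V(C)$, exactly $4 - 2k$ edges leave $V(C)$, where $k\in\{0,1,2\}$ is the number of chords of $C$; so, writing $H = G\setminus V(C)$, we have $\sum_{v\in V(H)}\bigl(3 - d(v)\bigr) \le 4$ in $H$, i.e.\ $H$ is cubic apart from a total degree-deficiency of at most $4$. From this I would read off the coarse structure of $H$: every component meets $V(C)$ (else $G$ is disconnected); a tree component must be a single edge; $H$ has at most one isolated vertex; and, crucially, any component of $H$ that fails to have a $\{1,2\}$-factor must be one of finitely many small sporadic graphs whose deficiency forces it to be joined to the rest of $G$ by at most one edge (a bridge). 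In those two exceptional situations — an isolated vertex $w$, or a sporadic bad component $K$ — I would enlarge $S$ to $V(C)\cup\{w\}$, resp.\ $V(C)\cup V(K)$; one checks directly that the enlarged $G[S]$ still has a $2$-coupon coloring (it still carries a spanning subgraph giving each vertex two neighbours of opposite colour), that $S$ is still a proper subset, and that $G\setminus S$ becomes connected with degree-deficiency at most $1$.

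The main obstacle is the final step: verifying that the graphs actually occurring here — a connected graph of maximum degree at most $3$ with small, spread-out degree-deficiency, whether a component of $H$ or $G\setminus S$ after absorption — has a $\{1,2\}$-factor. Minimum degree at least $2$ does \emph{not} suffice on its own (for instance $K_{2,3}$ has a $2$-coupon coloring but no $\{1,2\}$-factor), which is exactly why the deficiency bound and the case analysis are needed. The useful sub-facts are: a parity computation on the two sides shows that a connected graph with exactly one vertex of degree $2$ and all others of degree $3$ is non-bipartite; a connected non-bipartite graph with $\delta \ge 2$ has a $\{1,2\}$-factor; and a connected bipartite graph of maximum degree at most $3$ with at most one degree-$2$ vertex in each side satisfies Hall's condition, hence has a perfect matching. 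Assembling these over the (finite) list of attachment patterns of $C$ — chords, coincidences among the outside-neighbours of $C$, and the possible small components of $H$ — produces an admissible $S$ in every case, and Theorem~\ref{theorem3} then delivers a $2$-coupon coloring of $G$. The real work, and the place needing care, is this case analysis together with the lemma that the relevant near-cubic graphs carry $\{1,2\}$-factors.
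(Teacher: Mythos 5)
Your opening reduction is the right one and matches the paper: take $S=V(C)$, observe that $G[S]$ contains a spanning $C_4$ and hence has a $2$-coupon coloring, and invoke Theorem~\ref{theorem3}. But the proof then stops exactly where the real content begins: you never actually establish that $G\setminus S$ (or your enlarged $S$) leaves behind a $\{1,2\}$-factor. The ``finitely many small sporadic bad components'' claim is asserted, not proved, and the key lemma you lean on --- \emph{a connected non-bipartite graph with $\delta\ge 2$ has a $\{1,2\}$-factor} --- is false. By the perfect $2$-matching criterion, a graph has a $\{1,2\}$-factor iff $i(G-T)\le |T|$ for every $T$; take $T=\{t_1,t_2,t_3\}$ independent, four vertices $x_1,\dots,x_4$ each with exactly two neighbours in $T$ (using eight of the nine edge-slots of $T$), and attach the ninth slot of $t_1$ to an odd cycle. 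The result is connected, non-bipartite, subcubic with $\delta\ge 2$, yet $i(G-T)=4>3$, so no $\{1,2\}$-factor exists. So even the corrected ``near-cubic'' version of your lemma would need a genuine proof tied to the specific structure of $G\setminus V(C)$, and that is precisely the unexecuted case analysis you defer to at the end. As written, the argument is a plan, not a proof.

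The paper avoids all of this with two ideas you are missing. First, it treats the bipartite case directly by Hall's theorem: if $G=(X,Y)$ is bipartite and cubic, the $4$-cycle $C$ has exactly two vertices in $Y$, each with only one edge leaving $C$, so for any $S\subseteq X'$ in $H=G\setminus V(C)$ the number of edges from $S$ into $Y'$ is at least $3|S|-2$; if $|N_H(S)|\le |S|-1$ some vertex of $Y'$ would have degree exceeding $3$. Hence $H$ has a perfect matching --- no chords, no isolated vertices, no sporadic components can occur. Second, for non-bipartite $G$ it does not work in $G$ at all: it passes to the bipartite double cover $G^*$ (already used in the proof of Theorem~\ref{theorem3}), in which the $4$-cycle of $G$ lifts to vertex-disjoint $4$-cycles, applies the bipartite argument there, and pulls the coloring back. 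If you want to complete your version, the cleanest fix is to adopt exactly these two steps rather than attempt a classification of the components of $G\setminus V(C)$ in an arbitrary cubic graph.
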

\begin{proof}
    First assume that $G=(X,Y)$ is bipartite. If $C$ is a $4$-cycle of $G$, we claim that $G\backslash V(C)$ has a perfect matching. Let $H=(X’,Y’)=G\backslash V(C)$ and $S  \subseteq X’$. By Marriage Theorem [\ref{3}] it suffices to show that $|N_H(S)|\ge|S|$. Clearly, the number of edges between $S$ and $Y’$ is at least $3|S|-2$. If $|N_H(S)|\le|S|-1$, then there exists a vertex in $Y’$ whose degree is at least $\frac{3|S|-2}{|S|-1}>3$, a contradiction. So  $H$ has a perfect matching and the claim is proved. Since $C$ has a $2$-coupon coloring, Theorem \ref{theorem3} yields that $G$ has a $2$-coupon coloring.\\
    Next, suppose that $G$ is non-bipartite. Consider the connected bipartite cubic graph $G^*$. Since $G$ contains a $4$-cycle, $G^*$ contains two vertex disjoint $4$-cycles say $C_1$ and $C_2$. By the same method as we did before, one can see that $G^*\backslash V(C_1)$ has a $\left\{ 1,2 \right\}$-factor. Since $C_1$ has a $2$-coupon coloring, by Theorem \ref{theorem3}, $G^*$ and so $G$ has a $2$-coupon coloring, as desired.
\end{proof}
\begin{remark}
For every positive integer $r\, (r\ge3)$, not divisible by $4$, there exists a connected cubic graph that contains an induced $r$-cycle and has no $2$-coupon coloring.
\end{remark}
\begin{proof}
We define $H_1,H_2$ and $H_3$ as follows:\\
\begin{figure}[htp]
    \setkeys{Gin}{width=\linewidth}
\begin{tblr}{colsep=3pt,
             colspec={@{} *{3}{X[c]}@{}}, 
             measure = vbox}
\centering
\begin{tikzpicture}
\draw [line width=2pt] (0.49,5.36) circle (1cm);
\draw [line width=2pt] (0.49,3.86)-- (1.2353559924999291,4.693333333333332);
\draw [line width=2pt] (0.49,3.86)-- (-0.2553559924999292,4.693333333333332);
\draw (-0.11,5.68) node[anchor=north west,draw=none,fill=none,rectangle,inner sep=5pt] {$\mathcal{H} \backslash e$};
\draw [line width=2pt,dash pattern=on 2pt off 3pt] (-0.2553559924999292,4.693333333333332)-- (1.2353559924999291,4.693333333333332);
\begin{scriptsize}
\draw [fill=black] (0.49,3.86) circle (3pt);
\draw [fill=black] (-0.2553559924999292,4.693333333333332) circle (3pt);
\draw [fill=black] (1.2353559924999291,4.693333333333332) circle (3pt);
\end{scriptsize}
\end{tikzpicture}
\caption*{$H_1$}
    &
\centering
\begin{tikzpicture}
\draw [line width=2pt] (2.00432154600671,4.348177441116346) circle (1cm);
\draw [line width=2pt] (4.486224366859763,4.335727207049599) circle (1cm);
\draw [line width=2pt] (2.9385764369408065,3.9915714978600416)-- (3.5484388236588074,3.9885121800659067);
\draw (1.45,4.72) node[anchor=north west,draw=none,fill=none,rectangle,inner sep=5pt] {$\mathcal{H} \backslash e$};
\draw (3.87,4.72) node[anchor=north west,draw=none,fill=none,rectangle,inner sep=5pt] {$\mathcal{H} \backslash e$};
\draw [line width=2pt,dash pattern=on 2pt off 3pt] (1.1877537518012418,3.7709278958879295)-- (2.9385764369408065,3.9915714978600416);
\draw [line width=2pt,dash pattern=on 2pt off 3pt] (3.5484388236588074,3.9885121800659067)-- (5.2969597740277585,3.7503144470553025);
\begin{scriptsize}
\draw [fill=black] (1.1877537518012418,3.7709278958879295) circle (3pt);
\draw [fill=black] (2.9385764369408065,3.9915714978600416) circle (3pt);
\draw [fill=black] (3.5484388236588074,3.9885121800659067) circle (3pt);
\draw [fill=black] (5.2969597740277585,3.7503144470553025) circle (3pt);
\end{scriptsize}
\end{tikzpicture}
\caption*{$H_2$}
    &
\centering
\begin{tikzpicture}
\draw [line width=2pt] (-0.91,5.32) circle (1cm);
\draw [line width=2pt] (1.89,5.32) circle (1cm);
\draw [line width=2pt] (0.49,4.04)-- (-1.0943327112027714,4.337136097121968);
\draw [line width=2pt] (0.05245632169632408,5.591436601855354)-- (0.9275436783036834,5.591436601855345);
\draw [line width=2pt] (0.49,4.04)-- (2.0743327112027647,4.337136097121975);
\draw (1.3,5.6) node[anchor=north west,draw=none,fill=none,rectangle,inner sep=5pt] {$\mathcal{H} \backslash e$};
\draw (-1.47,5.6) node[anchor=north west,draw=none,fill=none,rectangle,inner sep=5pt] {$\mathcal{H} \backslash e$};
\draw [line width=2pt,dash pattern=on 2pt off 3pt] (0.05245632169632408,5.591436601855354)-- (-1.0943327112027714,4.337136097121968);
\draw [line width=2pt,dash pattern=on 2pt off 3pt] (0.9275436783036834,5.591436601855345)-- (2.0743327112027647,4.337136097121975);
\begin{scriptsize}
\draw [fill=black] (0.49,4.04) circle (3pt);
\draw [fill=black] (0.05245632169632408,5.591436601855354) circle (3pt);
\draw [fill=black] (0.9275436783036834,5.591436601855345) circle (3pt);
\draw [fill=black] (2.0743327112027647,4.337136097121975) circle (3pt);
\draw [fill=black] (-1.0943327112027714,4.337136097121968) circle (3pt);
\end{scriptsize}
\end{tikzpicture}
\caption*{$H_3$}
\end{tblr}
\end{figure}
\\
\newpage
\noindent Now, three cases can be considered.\\
\textbf{Case 1.}
$r=1\,(mod \, 4)$. Consider an $r$-cycle $C$ with the vertex set $\left\{v_1,\ldots,v_r  \right\}$. We join $v_1$ to the $2$-vertex of $H_1$. Now, consider $\frac{r-1}{2}$ disjoint copies of $H_2$.
Join $v_{2k}$ to one $2$-vertex and $v_{2k+1}$ to another $2$-vertex of $k$-th copy of $H_2$, for $k=1,2,\ldots,\frac{r-1}{2}$.\\
\begin{figure}[h]
\centering
\begin{tikzpicture}[line cap=round,line join=round,>=triangle 45,x=1cm,y=1cm,scale=0.75]
\draw [line width=2pt] (0.49,2.9016374245060907)-- (-1.29,2.24);
\draw [line width=2pt] (0.49,2.9016374245060907)-- (2.27,2.24);
\draw [line width=2pt] (-1.29,2.24)-- (-2.205679855413668,0.5763610716574773);
\draw [line width=2pt] (2.27,2.24)-- (3.185679855413668,0.5763610716574772);
\draw [line width=2pt] (3.185679855413668,0.5763610716574772)-- (2.7924100465635124,-1.2814610239571742);
\draw [line width=2pt] (-2.205679855413668,0.5763610716574773)-- (-1.812410046563512,-1.2814610239571742);
\draw [line width=2pt] (3.743415552046838,3.8633513999727724) circle (1cm);
\draw [line width=2pt] (5.345439942149063,0.9527379101475304) circle (1cm);
\draw [line width=2pt] (-2.7634155520468395,3.863351399972771) circle (1cm);
\draw [line width=2pt] (-4.365439942149063,0.952737910147531) circle (1cm);
\draw [line width=2pt] (0.49,5.36) circle (1cm);
\draw [line width=2pt] (0.49,3.86)-- (1.2353559924999291,4.693333333333332);
\draw [line width=2pt] (0.49,3.86)-- (-0.2553559924999292,4.693333333333332);
\draw [line width=2pt] (4.095806397200763,2.9274984776330966)-- (4.743292673969191,1.7511229472214438);
\draw [line width=2pt] (3.185679855413668,0.5763610716574772)-- (5.048852375814843,-0.0022677523990022467);
\draw [line width=2pt] (2.27,2.24)-- (2.7778964754415543,4.123683712049637);
\draw [line width=2pt] (-1.29,2.24)-- (-1.7978964754415592,4.123683712049635);
\draw [line width=2pt] (-3.115806397200759,2.9274984776331)-- (-3.76329267396919,1.7511229472214462);
\draw [line width=2pt] (-2.205679855413668,0.5763610716574773)-- (-4.068852375814843,-0.0022677523990035976);
\draw [line width=2pt] (0.49,3.86)-- (0.49,2.9016374245060907);
\draw (-0.1,-1.7) node[anchor=north west,draw=none,fill=none,rectangle,inner sep=5pt] {$\mathit{\ldots}$};
\draw (-0.1,0.42) node[anchor=north west,draw=none,fill=none,rectangle,inner sep=5pt] {$\mathit{C}$};
\draw (2.99,4.34) node[anchor=north west,draw=none,fill=none,rectangle,inner sep=5pt] {$\mathcal{H} \backslash e$};
\draw (4.53,1.44) node[anchor=north west,draw=none,fill=none,rectangle,inner sep=5pt] {$\mathcal{H} \backslash e$};
\draw (-0.31,5.88) node[anchor=north west,draw=none,fill=none,rectangle,inner sep=5pt] {$\mathcal{H} \backslash e$};
\draw (-3.57,4.38) node[anchor=north west,draw=none,fill=none,rectangle,inner sep=5pt] {$\mathcal{H} \backslash e$};
\draw (-5.17,1.46) node[anchor=north west,draw=none,fill=none,rectangle,inner sep=5pt] {$\mathcal{H} \backslash e$};
\begin{scriptsize}
\draw [fill=black] (-1.29,2.24) circle (3pt);
\draw [fill=black] (2.27,2.24) circle (3pt);
\draw [fill=black] (0.49,2.9016374245060907) circle (3pt);
\draw [fill=black] (3.185679855413668,0.5763610716574772) circle (3pt);
\draw [fill=black] (-2.205679855413668,0.5763610716574773) circle (3pt);
\draw [fill=black] (2.7924100465635124,-1.2814610239571742) circle (3pt);
\draw [fill=black] (-1.812410046563512,-1.2814610239571742) circle (3pt);
\draw [fill=black] (-1.7978964754415592,4.123683712049635) circle (3pt);
\draw [fill=black] (-3.115806397200759,2.9274984776331) circle (3pt);
\draw [fill=black] (-3.76329267396919,1.7511229472214462) circle (3pt);
\draw [fill=black] (-4.068852375814843,-0.0022677523990035976) circle (3pt);
\draw [fill=black] (2.7778964754415543,4.123683712049637) circle (3pt);
\draw [fill=black] (4.095806397200763,2.9274984776330966) circle (3pt);
\draw [fill=black] (4.743292673969191,1.7511229472214438) circle (3pt);
\draw [fill=black] (5.048852375814843,-0.0022677523990022467) circle (3pt);
\draw [fill=black] (0.49,3.86) circle (3pt);
\draw [fill=black] (-0.2553559924999292,4.693333333333332) circle (3pt);
\draw [fill=black] (1.2353559924999291,4.693333333333332) circle (3pt);
\end{scriptsize}
\end{tikzpicture}
\caption*{$r=1\,(mod \, 4)$}
\end{figure}
\\
\textbf{Case 2.}
$r=1\,(mod \, 4)$. Consider an $r$-cycle $C$ with the vertex set $\left\{v_1,\ldots,v_r  \right\}$. We join $v_1$ to the $2$-vertex of $H_3$. Now, consider $\frac{r-1}{2}$ disjoint copies of $H_2$.
Join $v_{2k}$ to one $2$-vertex and $v_{2k+1}$ to another $2$-vertex of $k$-th copy of $H_2$, for $k=1,2,\ldots,\frac{r-1}{2}$.\\
\begin{figure}[htp]
\centering
\begin{tikzpicture}[line cap=round,line join=round,>=triangle 45,x=1cm,y=1cm,scale=0.75]
\draw [line width=2pt] (0.49,2.9016374245060907)-- (-1.29,2.24);
\draw [line width=2pt] (0.49,2.9016374245060907)-- (2.27,2.24);
\draw [line width=2pt] (-1.29,2.24)-- (-2.205679855413668,0.5763610716574773);
\draw [line width=2pt] (2.27,2.24)-- (3.185679855413668,0.5763610716574772);
\draw [line width=2pt] (3.185679855413668,0.5763610716574772)-- (2.7924100465635124,-1.2814610239571742);
\draw [line width=2pt] (-2.205679855413668,0.5763610716574773)-- (-1.812410046563512,-1.2814610239571742);
\draw [line width=2pt] (3.9554835223983495,3.478058954315573) circle (1cm);
\draw [line width=2pt] (5.133371971797551,1.3380303558047293) circle (1cm);
\draw [line width=2pt] (-2.975483522398351,3.478058954315572) circle (1cm);
\draw [line width=2pt] (-4.153371971797551,1.3380303558047302) circle (1cm);
\draw [line width=2pt] (4.090043899881568,2.4871535578162054)-- (4.368177905983735,1.9818300464254397);
\draw [line width=2pt] (3.185679855413668,0.5763610716574772)-- (5.007916586412364,0.3459310934938173);
\draw [line width=2pt] (2.27,2.24)-- (3.050177624305767,3.902819158288093);
\draw [line width=2pt] (-1.29,2.24)-- (-2.070177624305765,3.902819158288097);
\draw [line width=2pt] (-3.1100438998815703,2.4871535578162005)-- (-3.388177905983735,1.9818300464254393);
\draw [line width=2pt] (-2.205679855413668,0.5763610716574773)-- (-4.027916586412363,0.3459310934938191);
\draw [line width=2pt] (0.49,4.04)-- (0.49,2.9016374245060907);
\draw (-0.1,-1.7) node[anchor=north west,draw=none,fill=none,rectangle,inner sep=5pt,draw=none,fill=none,rectangle,inner sep=5pt] {$\mathit{\ldots}$};
\draw (-0.1,0.42) node[anchor=north west,draw=none,fill=none,rectangle,inner sep=5pt] {$\mathit{C}$};
\draw (3.19,3.9) node[anchor=north west,draw=none,fill=none,rectangle,inner sep=5pt] {$\mathcal{H} \backslash e$};
\draw (4.31,1.82) node[anchor=north west,draw=none,fill=none,rectangle,inner sep=5pt] {$\mathcal{H} \backslash e$};
\draw (-3.83,4) node[anchor=north west,draw=none,fill=none,rectangle,inner sep=5pt] {$\mathcal{H} \backslash e$};
\draw (-4.95,1.84) node[anchor=north west,draw=none,fill=none,rectangle,inner sep=5pt] {$\mathcal{H} \backslash e$};
\draw [line width=2pt] (-0.91,5.32) circle (1cm);
\draw [line width=2pt] (1.89,5.32) circle (1cm);
\draw [line width=2pt] (0.49,4.04)-- (-1.0943327112027714,4.337136097121968);
\draw [line width=2pt] (0.05245632169632408,5.591436601855354)-- (0.9275436783036834,5.591436601855345);
\draw [line width=2pt] (0.49,4.04)-- (2.0743327112027647,4.337136097121975);
\draw (1.09,5.8) node[anchor=north west,draw=none,fill=none,rectangle,inner sep=5pt] {$\mathcal{H} \backslash e$};
\draw (-1.67,5.8) node[anchor=north west,draw=none,fill=none,rectangle,inner sep=5pt] {$\mathcal{H} \backslash e$};
\begin{scriptsize}
\draw [fill=black] (-1.29,2.24) circle (3pt);
\draw [fill=black] (2.27,2.24) circle (3pt);
\draw [fill=black] (0.49,2.9016374245060907) circle (3pt);
\draw [fill=black] (3.185679855413668,0.5763610716574772) circle (3pt);
\draw [fill=black] (-2.205679855413668,0.5763610716574773) circle (3pt);
\draw [fill=black] (2.7924100465635124,-1.2814610239571742) circle (3pt);
\draw [fill=black] (-1.812410046563512,-1.2814610239571742) circle (3pt);
\draw [fill=black] (-2.070177624305765,3.902819158288097) circle (3pt);
\draw [fill=black] (-3.1100438998815703,2.4871535578162005) circle (3pt);
\draw [fill=black] (-3.388177905983735,1.9818300464254393) circle (3pt);
\draw [fill=black] (-4.027916586412363,0.3459310934938191) circle (3pt);
\draw [fill=black] (3.050177624305767,3.902819158288093) circle (3pt);
\draw [fill=black] (4.090043899881568,2.4871535578162054) circle (3pt);
\draw [fill=black] (4.368177905983735,1.9818300464254397) circle (3pt);
\draw [fill=black] (5.007916586412364,0.3459310934938173) circle (3pt);
\draw [fill=black] (0.49,4.04) circle (3pt);
\draw [fill=black] (0.05245632169632408,5.591436601855354) circle (3pt);
\draw [fill=black] (0.9275436783036834,5.591436601855345) circle (3pt);
\draw [fill=black] (2.0743327112027647,4.337136097121975) circle (3pt);
\draw [fill=black] (-1.0943327112027714,4.337136097121968) circle (3pt);
\end{scriptsize}
\end{tikzpicture}
\caption*{$r=3\,(mod \, 4)$}
\end{figure}
\\
\newpage
\noindent
\textbf{Case 3.}
$r=2\,(mod\, 4)$. Since $\frac{r}{2}$ is odd, by Cases $1$ and $2$, there exists a connected cubic graph $G$ with an induced $\frac{r}{2}$-cycle which has no $2$-coupon coloring. Now,  $G^*$ satisfies the condition.\\
By a similar argument as we did in the proof of Theorem \ref{theorem1}, one can see that the connected cubic graphs introduced in Cases $1,2$ and $3$ have no $2$-coupon coloring.
\end{proof}
\end{sloppypar}


\begin{thebibliography}{99}
\small
\frenchspacing
\bibitem{coup}
\label{1}
\href{https://www.sciencedirect.com/science/article/pii/S0166218X1500219X}{B. Chen, J. H. Kim, M. Tait, and J. Verstraete, On coupon colorings of graphs, Discrete Applied Math,
193(2015), 94-101 .}

\bibitem{DHH}
\label{2}
\href{https://www.sciencedirect.com/science/article/pii/S0166218X17300690?via%3Dihub}{W.J. Desormeaux , T.W. Haynes, M.A. Henning, 
Partitioning the vertices of a cubic graph into two total
dominating sets, Discrete Applied Mathematics 223 (2017) 52-63.}

\bibitem{PH}
\label{3}
\href{https://londmathsoc.onlinelibrary.wiley.com/doi/abs/10.1112/jlms/s1-10.37.26}{P. Hall, On Representatives of Subsets, J. London Math. Soc. 10(1935), no. 1,26-30.}

\bibitem{ccc}
\label{4}
 Z. L. Nagy, Coupon-coloring and total domination in Hamiltonian planar triangulations, Graphs Combin. 34(2018), no. 6, 1385-1394.
 
\bibitem{c}
\label{5}
Y. Shi, M. Wei, J. Yun, Y. Zhao, Coupon coloring of some special graphs, J. Comb. Optim. 33(2017), no. 1, 156-164.




\end{thebibliography}
\end{document}